\theoremstyle{plain}
\newtheorem{theorem}{Theorem}
\newtheorem{lemma}[theorem]{Lemma}
\newtheorem{prop}[theorem]{Proposition}
\theoremstyle{remark}
\theoremstyle{definition}
\newtheorem{definition}[theorem]{Definition}
\newtheorem{example}[theorem]{Example}
\newcommand{\R}{\mathbb{R}}
\begin{document}

\title{Characterization of distributions whose forward differences are exponential polynomials}
\author{J.~M.~Almira}

\subjclass[2010]{Primary 43B45, 39A70; Secondary 39B52.}

\keywords{Levi-Civita Functional equation, Polynomials and Exponential Polynomials on abelian Groups, Linear Functional Equations, Montel's theorem, Fr\'{e}chet's Theorem, Characterization Problem of the Normal Distribution, Generalized Functions}
%\thanks{$^*$ Corresponding author }
%\let\thefootnote\relax\footnotetext{The research was supported by the Hungarian National Foundation for Scientific Research (OTKA),   Grant No. NK-81402.}

\address{Departamento de Matem\'{a}ticas, Universidad de Ja\'{e}n, E.P.S. Linares,  Campus Cient\'{\i}fico Tecnol\'{o}gico de Linares, 23700 Linares, Spain}
\email{jmalmira@ujaen.es}

\begin{abstract}
Given $\{h_1,\cdots,h_{t}\} $ a finite subset  of $\mathbb{R}^d$, we study the continuous complex valued functions and the Schwartz complex valued distributions $f$ defined on $\mathbb{R}^d$ with the property that the forward differences  $\Delta_{h_k}^{m_k}f$ are (in distributional sense) continuous exponential polynomials for some natural numbers $m_1,\cdots,m_t$. 
\end{abstract}

\maketitle

\markboth{J.~M.~Almira}{Distributions whose forward differences are exponential polynomials}

\section{Introduction}

Let  $X_d$ indistinctly denote  either the set of continuous complex valued functions $C(\mathbb{R}^d)$ or the set of Schwartz complex valued distributions 
$\mathcal{D}(\mathbb{R}^d)'$. Let  $f\in X_d$ and let us denote by $\tau_y$ and $\Delta_h^m$  the translation  operator and the forward differences operator defined on $X_d$, respectively. In formulas,  
$(\tau_yf)(x)=f(x+y)$ and $(\Delta_h^mf)(x)=\sum_{k=0}^{m}\binom{m}{k}(-1)^{m-k}\tau_{kh}(f)$ if $f$ is an ordinary function, and $\tau_yf\{\varphi\}=f\{\tau_{-y}(\varphi)\}$, $(\Delta_h^mf)\{\varphi\}=f\{\Delta_{-h}^m(\varphi)\}$ if $f\in\mathcal{D}(\mathbb{R}^d)'$, 
$\varphi\in \mathcal{D}(\mathbb{R}^d)$.

We prove that, if $\{h_1,\cdots,h_{t}\} $ spans a dense subgroup of $\mathbb{R}^d$, $f\in X_d$ and there exist natural numbers 
$\{m_k\}_{k=1}^t$ such that, for every $k\in \{1,...,t\}$,  $\Delta_{h_k}^{m_k}f$ is (in distributional sense) a continuous exponential polynomial,  then $f$ is (in distributional sense) a continuous exponential polynomial. Moreover, for the case of continuous functions, we characterize the functions $f$ satisfying that $\Delta_{h_k}^{m_k}f$ is an exponential polynomial for arbitrary sets  $\{h_1,\cdots,h_{t}\} $.

\section{The case of finitely generated dense subgroups of $\mathbb{R}^d$}
Let us state two technical results, which are  important for our arguments in this section. These results were, indeed, recently introduced by the author, and have  proved  their usefulness for the study of several Montel-type theorems for polynomial and exponential polynomial functions (see, e.g., \cite{AA_AM}-\cite{AS_DM}). We include the proofs for the sake of completeness.

\begin{definition} Let $t$ be a positive integer, $E$ a vector space, $L_1,L_2,\cdots,L_t:E\to E$ pairwise commuting linear operators. Given a subspace $V\subseteq E$, we denote by   $\diamond_{L_1,L_2,\cdots,L_t}(V)$ the smallest subspace of $E$ containing $V$ which is $L_i$-invariant for $i=1,2,\dots,t$. \end{definition}

\begin{lemma} \label{uno} With the notation we have just introduced,  if $V$ is an $L^n$-invariant subspace of $E$, then the linear space
\begin{equation*}
V_L^{[n]}=V+L(V)+\dots +L^n(V)\,.
\end{equation*}
is $L$-invariant. Furthermore $V_L^{[n]} =  \diamond_{L}(V)$. In other words, $V_L^{[n]}$ is the smallest $L$-invariant subspace of $E$ containing $V$.
\end{lemma}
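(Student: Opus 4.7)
The plan is to verify the two claims directly from the definition of $V_L^{[n]}$ and the hypothesis that $L^n(V)\subseteq V$, with minimality following automatically by the universal characterization of $\diamond_L(V)$.

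First I would show $L$-invariance of $V_L^{[n]}$. Applying $L$ termwise,
\[
L(V_L^{[n]}) = L(V)+L^2(V)+\cdots+L^n(V)+L^{n+1}(V).
\]
The first $n$ summands are already inside $V_L^{[n]}$ by construction. The only term that requires justification is $L^{n+1}(V)$, which we rewrite as $L(L^n(V))$; since $V$ is assumed $L^n$-invariant, $L^n(V)\subseteq V$, hence $L^{n+1}(V)\subseteq L(V)\subseteq V_L^{[n]}$. Thus $L(V_L^{[n]})\subseteq V_L^{[n]}$.

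Next I would address the minimality assertion. Any $L$-invariant subspace $W\subseteq E$ containing $V$ automatically contains $L^i(V)$ for every $i\geq 0$ (by induction on $i$ using $L(W)\subseteq W$), so in particular $W\supseteq V+L(V)+\cdots+L^n(V)=V_L^{[n]}$. Combined with the previous step, which shows $V_L^{[n]}$ is itself $L$-invariant and trivially contains $V$, this identifies $V_L^{[n]}$ as the smallest such subspace, i.e.\ $V_L^{[n]}=\diamond_L(V)$.

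There is no real obstacle here; the whole argument hinges on the observation that the $L^n$-invariance of $V$ is precisely what prevents the chain $V,L(V),L^2(V),\dots$ from producing anything new beyond degree $n$. The only point worth double-checking is the base of the induction in the minimality step, where one uses $L^0(V)=V\subseteq W$ before invoking $L$-invariance of $W$ to climb to higher powers.
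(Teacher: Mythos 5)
Your proof is correct and is essentially the paper's own argument: both reduce to the observation that $L^{n+1}(V)=L(L^n(V))\subseteq L(V)$ by the $L^n$-invariance of $V$ (the paper phrases this element-wise, you phrase it termwise on the summands), and the minimality step is identical. No issues.
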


\begin{proof}
Let $v$ be in $V_L^{[n]}$, then
\begin{equation}\label{Box}
v=v_0+Lv_1+\dots+L^{n-1}v_{n-1}+L^nv_n
\end{equation}
with some elements $v_0,v_1,\dots,v_n$ in $V$. By the $L^n$-invariance of $V$, we have that $L^nv_n=u$ is in $V$, hence it follows
\begin{equation*}
Lv=L(v_0+u)+L^2v_1+\dots+L^{n}v_{n-1}\,,
\end{equation*}
and the right hand side is clearly in $V_L^{[n]}$. This proves that $V_L^{[n]}$ is $L$-invariant. On the other hand, if $W$ is an $L$-invariant subspace of $E$, which contains $V$, then $L^k(V)\subseteq W$ for $k=1,2,\dots,n$, hence the right hand side of \eqref{Box} is in $W$.
\end{proof}

\begin{lemma}\label{dos}
Let $t$ be a positive integer, $E$ a vector space, $L_1,L_2,\cdots,L_t:E\to E$ pairwise commuting linear operators, and let $s_1,\cdots, s_t$ be natural numbers. Given a subspace $V\subseteq E$ we form the sequence of subspaces
\begin{equation}\label{rec}
V_0=V,\enskip V_{i}=(V_{i-1})_{L_{i}}^{[s_{i}]},\enskip i=1,2,\dots,t\,.
\end{equation}
If  for $i=1,2,\dots,t$ the subspace $V$ is $L_i^{s_i}$-invariant, then $V_t$ is $L_i$-invariant, and it contains $V$. Furthermore $V_t= \diamond_{L_1,L_2,\cdots,L_t}(V)$ and $\dim(V)<\infty$  if and only if $\dim(\diamond_{L_1,L_2,\cdots,L_t}(V))<\infty$.
\end{lemma}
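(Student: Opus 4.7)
The strategy is a straightforward induction on $i$ that unpacks the recursion (\ref{rec}) by repeatedly invoking Lemma \ref{uno}. The chief subtlety is that to apply Lemma \ref{uno} at the $i$-th step I need $V_{i-1}$ to be $L_{i}^{s_i}$-invariant, not merely $V$, so I must carry the invariance hypotheses along as I build the tower.

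Concretely, I would prove by induction on $i\in\{0,1,\dots,t\}$ the joint statement: \emph{(a)} $V_i$ is $L_j$-invariant for every $j\le i$, and \emph{(b)} $V_i$ is $L_j^{s_j}$-invariant for every $j>i$. The base case $i=0$ is precisely the hypothesis. For the inductive step, (b) at $i-1$ gives that $V_{i-1}$ is $L_i^{s_i}$-invariant, so Lemma \ref{uno} applies with $L=L_i$ and $n=s_i$, yielding $V_i=(V_{i-1})_{L_i}^{[s_i]}=\diamond_{L_i}(V_{i-1})$, and in particular $V_i$ is $L_i$-invariant and contains $V_{i-1}$. The remaining invariances in (a) for $j<i$ and in (b) for $j>i$ follow from writing a typical element of $V_i$ as $\sum_{k=0}^{s_i}L_i^k v_k$ with $v_k\in V_{i-1}$ and exploiting $L_jL_i=L_iL_j$: each $L_j$ (for $j<i$) or $L_j^{s_j}$ (for $j>i$) commutes past the $L_i^k$ and lands on $V_{i-1}$, which is already $L_j$- respectively $L_j^{s_j}$-invariant by the inductive hypothesis. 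Setting $i=t$ in (a) shows $V_t$ is $L_j$-invariant for every $j$, and the chain $V=V_0\subseteq V_1\subseteq\cdots\subseteq V_t$ is immediate, so $V\subseteq V_t$.

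For the identification $V_t=\diamond_{L_1,\ldots,L_t}(V)$ I argue minimality: if $W$ is any subspace containing $V$ that is $L_j$-invariant for every $j$, then inductively $W\supseteq V_{i-1}$ together with the $L_i$-invariance of $W$ forces $W\supseteq\diamond_{L_i}(V_{i-1})=V_i$ via the minimality clause of Lemma \ref{uno}, so $W\supseteq V_t$. Finally, for the dimension statement, one direction is $V\subseteq\diamond_{L_1,\ldots,L_t}(V)$; for the other, the explicit formula $V_i=V_{i-1}+L_i(V_{i-1})+\cdots+L_i^{s_i}(V_{i-1})$ gives $\dim V_i\le (s_i+1)\dim V_{i-1}$, so iterating yields $\dim V_t\le\prod_{i=1}^{t}(s_i+1)\cdot\dim V<\infty$. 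The only point that takes real care is tracking condition (b) through the induction: it is tempting to apply Lemma \ref{uno} formally and forget that the operators with higher index must still possess the required $s_j$-th power invariance at each stage.
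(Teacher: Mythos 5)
Your proof is correct and follows essentially the same route as the paper: build the tower $V_0\subseteq V_1\subseteq\cdots\subseteq V_t$, use commutativity to propagate the $L_j^{s_j}$-invariance through each stage so that Lemma \ref{uno} can be applied, and finish with the same minimality argument. The only difference is organizational — you run a single joint induction carrying both the full invariance for already-processed operators and the power-invariance for the remaining ones, whereas the paper first establishes all the $L_j^{s_j}$-invariances and then handles the $L_i$-invariance of $V_t$ in a second pass; your version also makes the (trivial but unstated in the paper) dimension bound $\dim V_t\le\prod_{i=1}^t(s_i+1)\dim V$ explicit.
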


\begin{proof}
First we prove by induction on $i$ that $V_{i}$ is $L_{j}^{s_{j}}$-invariant and it contains $V$ for each $i=0,1,\dots,t$ and $j=1,2,\dots,t$. For $i=0$ we have $V_0=V$, which is  $L_j^{s_j}$-invariant for $j=1,2,\dots,t$, by assumption.
\vskip.3cm

Suppose that $i\geq 1$, and we have proved the statement for $V_{i-1}$. Now we prove it for $V_{i}$. If $v$ is in $V_{i}$, then we have
\begin{equation*}
v=u_0+L_{i}u_1+\dots+L_{i}^{s_{i}}u_{s_i}\,,
\end{equation*}
where $u_j$ is in $V_{i-1}$ for $j=0,1,\dots,s_{i}$. It follows for $j=1,2,\dots,t$
\begin{equation*}
L_{j}^{s_{j}}v=(L_{j}^{s_{j}}u_0)+L_{i}(L_{j}^{s_{j}}u_1)+\dots+L_{i}^{s_{i}}(L_{j}^{s_{j}}u_{s_i})\,.
\end{equation*}
Here we used the commuting property of the given operators, which obviously holds for their powers, too. By the induction hypothesis, the elements in the brackets on the right hand side belong to $V_{i-1}$, hence $L_{j}^{s_{j}}v$ is in $V_i$, that is, $V_i$ is $L_{j}^{s_{j}}$-invariant. As $V_i$ includes $V_{i-1}$, we also conclude that $V$ is in $V_i$, and our statement is proved.
\vskip.3cm

Now we have
\begin{equation*}
V_t=V_{t-1}+L_t(V_{t-1})+\dots +L_t^{s_t}(V_{t-1})\,,
\end{equation*}
and we apply the previous lemma: as $V_{t-1}$ is $L_t^{s_t}$-invariant, we have that $V_t$ is $L_t$-invariant.
\vskip.3cm

Let us now prove the invariance of $V_t$
under the operators $L_j$ ($j<t$). Since $V_1$
is clearly $L_1$-invariant, by Lemma 12, an induction process
gives that $V_{t-1}$ is $L_i$-invariant for $1\leq i\leq t-1$. Thus, if we
take $1\leq i\leq t-1$, then we can use that $L_iL_t=L_tL_i$  and $L_i(V_{t-1})$ is a subset
of $V_{t-1}$ to conclude that
\begin{eqnarray*}
L_i(V_t)
&=& L_i(V_{t-1})+L_t(L_i(V_{t-1}))+\cdots +L_t^{s_t}(L_i(V_{t-1})) \\
&\subseteq & V_{t-1}+L_t(V_{t-1})+\cdots+L_t^{s_t}(V_{t-1}) =V_t\,,
\end{eqnarray*}
which completes this part of the proof.

Suppose that $W$ is a subspace in $E$ such that $V\subseteq W$, and $W$ is $L_j$-invariant for $j=1,2,\dots,t$. Then, obviously, all the subspaces $V_i$ for $i=1,2,\dots,t$ are included in $W$. In particular, $V_t$ is included in $W$. This proves that $V_t$ is the smallest subspace in $E$, which includes $V$, and which is invariant with respect to the family of operators $L_i$. In particular,  $V_t=\diamond_{L_1,L_2,\cdots,L_t}(V)$ is uniquely determined by $V$, and by the family of the operators $L_i$, no matter how we label these operators.
\end{proof}

The following result generalizes Anselone-Koreevar's Theorem \cite{anselone}.

\begin{lemma}\label{VdentroW} Assume that $W$ is a finite dimensional subspace of $X_d$, $\{h_1,\cdots,h_{t}\}\subset \mathbb{R}^d$  and $h_1\mathbb{Z}+h_2\mathbb{Z}+\cdots+h_t\mathbb{Z}$ is dense in $\mathbb{R}^d$. Assume, furthermore, that  $\Delta_{h_k}^{m_k}(W)\subseteq H$, $k=1,\cdots,t$,  for certain positive integral numbers $m_k$  and certain finite dimensional subspace $H$ of $X_d$ satisfying $$\bigcup_{k=1}^t \Delta_{h_k}^{m_k}(H) \subseteq H.$$  Then there exists a
finite dimensional subspace $Z$ of  $X_d$ which is invariant by translations and contains $W\cup H$. Consequently, all elements of $W\cup H$ are continuous exponential polynomials.
\end{lemma}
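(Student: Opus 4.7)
The plan is to combine Lemma~\ref{dos} with a density/continuity argument and then invoke the classical fact that finite dimensional translation invariant subspaces of $X_d$ consist of exponential polynomials.

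First I would set $V = W + H$ and $L_k = \Delta_{h_k}$, $s_k = m_k$. The operators $\Delta_{h_k} = \tau_{h_k} - I$ commute pairwise, since the translations do. By the hypothesis $\Delta_{h_k}^{m_k}(W) \subseteq H \subseteq V$ and $\Delta_{h_k}^{m_k}(H) \subseteq H \subseteq V$, so $V$ is $L_k^{s_k}$-invariant for every $k$. Applying Lemma~\ref{dos}, I obtain a finite dimensional subspace $V_t = \diamond_{L_1,\dots,L_t}(V) \subseteq X_d$ which contains $W\cup H$ and is $\Delta_{h_k}$-invariant for every $k$. Because $\tau_{h_k} = I + \Delta_{h_k}$, the space $V_t$ is $\tau_{h_k}$-invariant, and hence $\tau_y$-invariant for every $y$ in the subgroup $\Gamma := h_1\mathbb{Z}+\cdots+h_t\mathbb{Z}$.

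The next step is to upgrade $\Gamma$-invariance to full $\mathbb{R}^d$-invariance using density. For each $f \in V_t$ the map $y \mapsto \tau_y f$ is continuous from $\mathbb{R}^d$ into $X_d$ (in the compact-open topology on $C(\mathbb{R}^d)$, or in the standard topology on $\mathcal{D}(\mathbb{R}^d)'$). Since $V_t$ is finite dimensional, it is closed in $X_d$; combining this with $\tau_y f \in V_t$ for all $y \in \Gamma$ and with the density of $\Gamma$ in $\mathbb{R}^d$, I conclude that $\tau_y f \in V_t$ for every $y \in \mathbb{R}^d$. Thus $Z := V_t$ is a finite dimensional translation invariant subspace of $X_d$ containing $W \cup H$.

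Finally, to conclude that the elements of $W\cup H$ are continuous exponential polynomials, I would invoke the classical characterization (due to L.~Schwartz, and in the present context used throughout the series \cite{AA_AM}--\cite{AS_DM}) that every element of a finite dimensional translation invariant subspace of $C(\mathbb{R}^d)$, or of $\mathcal{D}(\mathbb{R}^d)'$ with continuous representative, is an exponential polynomial: indeed, the restrictions of the commuting operators $\tau_{e_1},\dots,\tau_{e_d}$ to $Z$ can be simultaneously put in Jordan form, and the corresponding joint eigenvectors/generalized eigenvectors are precisely exponential monomials.

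The main obstacle, in my view, is the second step: one has to be careful to work in a topology on $X_d$ in which translations act continuously and finite dimensional subspaces are automatically closed. Once this is clarified, the density of $\Gamma$ in $\mathbb{R}^d$ (which is where the hypothesis on the $h_k$'s enters in an essential way) transfers invariance from the discrete subgroup to all of $\mathbb{R}^d$, and the rest is a direct application of the two preceding lemmas plus a well-known structural result about translation invariant finite dimensional subspaces.
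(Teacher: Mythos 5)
Your proof is correct and follows essentially the same route as the paper: apply Lemma~\ref{dos} to $V=W+H$ with $L_k=\Delta_{h_k}$, $s_k=m_k$, upgrade the $\Delta_{h_k}$-invariance of the resulting finite dimensional space to full translation invariance via the density of $h_1\mathbb{Z}+\cdots+h_t\mathbb{Z}$ together with continuity of $y\mapsto\tau_y f$ and closedness of finite dimensional subspaces, and finish with Anselone--Korevaar. The only detail worth adding (which the paper also leaves implicit) is that invariance under $\tau_{-h_k}$, needed because the nonnegative semigroup generated by the $h_k$ need not be dense, follows since $\tau_{h_k}$ restricts to an injective, hence bijective, operator on the finite dimensional space $Z$.
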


\begin{proof} We apply Lemma \ref{dos} with $E=X_d$, $L_i=\Delta_{h_i}$, $s_i=m_i$,  $i=1,\cdots,t$, and $V=W+H$, since $$\Delta_{h_i}^{m_i}(W+H)=\Delta_{h_i}^{m_i}(W)+ \Delta_{h_i}^{m_i}(H)\subseteq H+\Delta_{h_i}^{m_i}(H)\subseteq H\subseteq W+H,$$ so that
$V\subseteq Z=\diamond_{\Delta_{h_1},\Delta_{h_2},\cdots,\Delta{h_t}}(V)$ and $Z$ is a finite dimensional subspace of $X_d$ satisfying $\Delta_{h_i}(Z)\subseteq Z$ , $i=1,2,\cdots,t$. Hence $Z$ is invariant by translations, since $h_1\mathbb{Z}+h_2\mathbb{Z}+\cdots+h_t\mathbb{Z}$ is dense in $\mathbb{R}^d$. Applying  Anselone-Koreevar's theorem, we conclude that  all elements of $Z$ (hence, also all elements of $W\cup H$) are continuous exponential polynomials.
\end{proof}

Now we can demonstrate the main result of this section:

\begin{theorem} \label{EP} Assume that $\{h_1,\cdots,h_{t}\} $ spans a dense subgroup of $\mathbb{R}^d$, $f\in X_d$ and there exist natural numbers 
$\{m_k\}_{k=1}^t$ such that, for every $k\in \{1,...,t\}$,  $\Delta_{h_k}^{m_k}f$ is a continuous exponential polynomial. Then $f$ is a continuous exponential polynomial.
\end{theorem}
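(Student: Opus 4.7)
The plan is to reduce the theorem to a direct application of Lemma \ref{VdentroW} by choosing the right pair of finite-dimensional subspaces $W$ and $H$. Set $g_k:=\Delta_{h_k}^{m_k}f$ for $k=1,\dots,t$; by hypothesis each $g_k$ is a continuous exponential polynomial. The essential feature of (continuous) exponential polynomials that I will use is the well-known characterization that the linear span $T(g)$ of all translates of such a function is a finite-dimensional, translation-invariant subspace of $X_d$. Applying this to each $g_k$ yields finite-dimensional translation-invariant subspaces $T_1,\dots,T_t$ of $X_d$ with $g_k\in T_k$.

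Now take $H:=T_1+T_2+\cdots+T_t$ and $W:=\mathbb{C}\,f+H$. Both are finite-dimensional, $H$ is translation-invariant (hence invariant under every $\Delta_{h_k}^{m_k}$, so in particular $\bigcup_{k=1}^t\Delta_{h_k}^{m_k}(H)\subseteq H$), and for each $k$
\[
\Delta_{h_k}^{m_k}(W)\subseteq\mathbb{C}\,\Delta_{h_k}^{m_k}(f)+\Delta_{h_k}^{m_k}(H)=\mathbb{C}\,g_k+\Delta_{h_k}^{m_k}(H)\subseteq H,
\]
since $g_k\in H$. Combined with the density of $h_1\mathbb{Z}+\cdots+h_t\mathbb{Z}$ in $\mathbb{R}^d$, these are precisely the hypotheses of Lemma \ref{VdentroW}. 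That lemma then produces a finite-dimensional translation-invariant subspace $Z$ of $X_d$ containing $W\cup H$, and certifies that every element of $W\cup H$ is a continuous exponential polynomial. Since $f\in W$, we conclude that $f$ itself is a continuous exponential polynomial, as required.

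The main (and essentially only) subtlety is the fact invoked at the start: that a continuous exponential polynomial generates, under translations, a finite-dimensional subspace of $X_d$. In the continuous case this is part of the classical equivalent definitions. In the distributional case it is exactly the sense in which the hypothesis "$\Delta_{h_k}^{m_k}f$ is (in distributional sense) a continuous exponential polynomial" should be read: $g_k$ equals, as a distribution, a continuous exponential polynomial function, and so its distributional translates still span a finite-dimensional subspace inside $X_d$. Once this point is accepted, the construction above and Lemma \ref{VdentroW} do all the work.
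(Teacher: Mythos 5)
Your proposal is correct and follows essentially the same route as the paper: the paper also sets $W=\mathbf{span}\{f\}$ and $H=\tau(\mathbf{span}\{g_k\}_{k=1}^t)$ (the translation-invariant hull of the $g_k$, which coincides with your $T_1+\cdots+T_t$) and then invokes Lemma \ref{VdentroW}. Your write-up merely makes explicit the finite-dimensionality of $H$ via Anselone--Korevaar, a point the paper states without elaboration.
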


\begin{proof}
Let $g_k=\Delta_{h_k}^{m_k}f$ be an exponential polynomial for $k=1,\cdots, t$. Then we can take, in  Lemma \ref{VdentroW},  $W=\mathbf{span}\{f\}$ and $H=\tau(\mathbf{span}\{g_k\}_{k=1}^t)$, the smallest subspace of $X_d$ which is translation invariant and contains $\{g_k\}_{k=1}^t$, since $H$ is finite dimensional.
\end{proof}

\section{Finitely generated nondense subgroups of $\mathbb{R}^d$}

In this section we demonstrate that density of $G=h_1\mathbb{Z}+h_2\mathbb{Z}+\cdots+h_t\mathbb{Z}$  in $\mathbb{R}^d$ is a necessary  hypothesis in Theorem \ref{EP}. Moreover, under the hypothesis that this group $G$ is not  a dense subgroup of $\R^d$, we characterize the continuous functions $f$ satisfying that, for a certain finite dimensional space $H\subseteq C(\mathbb{R}^d)$ and certain natural numbers  $n_k, m_k$, $k=1,\cdots, t$, the relations $\bigcup_{k=1}^t \Delta_{h_k}^{m_k}(H) \subseteq H$ and $\Delta_{h_k}^{n_k}f\in H$,  $k=1,\cdots,t$, hold.

For $d=1$, the condition that $\{h_1,....,h_t\}$ spans a dense subgroup of $\mathbb{R}^d$ can be resumed to: $t\geq 2$ and $h_i/h_j\not\in\mathbb{Q}$ for some $i\neq j$.  We prove that, if   $h_1/h_2 \in\mathbb{Q}$ and $m\geq 1$ then there are finite dimensional subspaces $V,H$ of $C(\mathbb{R})$ such that $\Delta_{h_1}^m(V)\cup \Delta_{h_2}^m(V) \subseteq H$, $H$ is $\Delta_{h_k}^{m}$-invariant (indeed, it can be chosen translation invariant), and no finite dimensional translation invariant subspace $W$ of  $C(\mathbb{R})$  satisfies $V\subseteq W$. To construct these spaces we need to use the following technical results:
\begin{lemma}\label{propiedaddiferencias} Assume that $f:\mathbb{R}\to \mathbb{C}$ satisfies $\Delta_h^mf=0$, and let $p\in \mathbb{Z}$. Then $\Delta_{ph}^mf=0$.
\end{lemma}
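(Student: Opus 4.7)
The plan is to exploit the operator calculus: the forward difference $\Delta_{ph}$ can be factored through $\Delta_h$ together with translations, all of which mutually commute because they are generated by translations along multiples of $h$. Writing $\Delta_{ph} = \tau_{ph} - I = \tau_h^p - I$ for $p \geq 1$ and using the standard algebraic identity $x^p - 1 = (x-1)(1+x+\cdots+x^{p-1})$ applied to the operator $\tau_h$, I obtain
\begin{equation*}
\Delta_{ph} \;=\; \Delta_h \circ Q_p, \qquad \text{where } Q_p \;=\; \sum_{j=0}^{p-1} \tau_{jh}.
\end{equation*}
Since $\Delta_h$ and $Q_p$ commute (both being polynomials in $\tau_h$), I can raise this to the $m$th power to get $\Delta_{ph}^m = Q_p^m \circ \Delta_h^m$. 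Applying this to $f$ and using the hypothesis $\Delta_h^m f = 0$ immediately yields $\Delta_{ph}^m f = Q_p^m(\Delta_h^m f) = Q_p^m(0) = 0$.

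The case $p = 0$ is trivial since $\Delta_0 = 0$. For negative $p$, I would first establish $\Delta_{-h}^m f = 0$ via the identity
\begin{equation*}
\Delta_{-h} \;=\; \tau_{-h} - I \;=\; -\tau_{-h}(\tau_h - I) \;=\; -\tau_{-h}\,\Delta_h,
\end{equation*}
which gives $\Delta_{-h}^m = (-1)^m \tau_{-mh} \Delta_h^m$ by commutativity, hence $\Delta_{-h}^m f = 0$. Then the case $p < 0$ reduces to the positive case applied with $h$ replaced by $-h$ and $p$ by $|p|$.

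There is no substantive obstacle here; the proof is purely algebraic manipulation of commuting operators on the space of functions, and no regularity on $f$ is required. The only minor point requiring care is the sign reduction for negative $p$, handled by the identity above.
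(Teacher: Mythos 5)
Your proof is correct and every step checks out: the factorization $\Delta_{ph}=\Delta_h\circ Q_p$ with $Q_p=\sum_{j=0}^{p-1}\tau_{jh}$ is valid for $p\geq 1$, all the operators involved commute because they are polynomials in $\tau_h$, and the reductions for $p=0$ and $p<0$ are handled cleanly. The paper's own proof takes a different, less algebraic route: for $m=1$ it invokes the fact that the periods of a function form an additive subgroup of $\mathbb{R}$, and for $m\geq 2$ it iterates this observation, writing $\Delta_h^m f=\Delta_h(\Delta_h^{m-1}f)=0$ to conclude that $ph$ is a period of $\Delta_h^{m-1}f$, and then commuting the resulting factor $\Delta_{ph}$ past the remaining copies of $\Delta_h$ one at a time until all $m$ of them have been exchanged. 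Your single identity $\Delta_{ph}^m=Q_p^m\circ\Delta_h^m$ collapses that iteration into one line, avoids any appeal to the period-subgroup fact, and makes the case $p<0$ explicit, whereas the paper absorbs it silently into the group structure of the set of periods. It is also worth noting that your two key identities --- the geometric-sum factorization of $\tau_h^p-I$ and the relation $\tau_{-h}-I=-\tau_{-h}(\tau_h-I)$ --- are precisely the ones the author deploys later in the proof of Lemma \ref{nuevo-ak}, so your argument is very much in the spirit of the paper even though it differs from the proof given for this particular lemma; what the paper's version buys is a proof phrased entirely in terms of periods, while yours buys an explicit formula relating the two difference operators.
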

\begin{proof} For $m=1$ the result is trivial, since the periods of the function $f$ form an additive subgroup of $\mathbb{R}$. For $m\geq 2$ the result follows from commutativity of the composition of the operators $\Delta_h$ (i.e., we use that $\Delta_h\Delta_kf=\Delta_k\Delta_hf$). Concretely, $\Delta_h^mf=\Delta_h(\Delta_h^{m-1}f)=0$ implies that $h$ is a period of $\Delta_h^{m-1}f$. Hence $ph$ is also a period of this function and $\Delta_{ph}(\Delta_h^{m-1}f)=0$. Now we use that $\Delta_{ph}(\Delta_h^{m-1}f)=\Delta_{h}(\Delta_{ph}\Delta_h^{m-2}f)$ and iterate the argument several times to conclude that $\Delta_{ph}^mf=0$.
\end{proof}

\begin{lemma} \label{ecuaciondiferencias} Let $h>0$ and assume that $g\in C(\mathbb{R})$ satisfies $g(h\mathbb{Z})=\{0\}$. Then there exists $f\in C(\mathbb{R})$ such that $f(h\mathbb{Z})=\{0\}$ and $\Delta_hf=g$. Consequently, for each $m\geq 1$ there exists $F_m\in C(\mathbb{R})$ such that $F_m(h\mathbb{Z})=\{0\}$ and $\Delta_h^mF_m=g$.
\end{lemma}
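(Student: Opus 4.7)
The idea is to solve the functional equation $f(x+h)-f(x)=g(x)$ by direct recursion on translates of a fundamental interval, exploiting the hypothesis $g(h\mathbb{Z})=\{0\}$ to make the piecewise definition continuous at the joins.

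Concretely, I will set $f\equiv 0$ on $[0,h]$ and then propagate: for $n\geq 1$ and $x\in[nh,(n+1)h]$ define
\[
f(x)=\sum_{j=1}^{n}g(x-jh),
\]
while for $n\geq 1$ and $x\in[-nh,-(n-1)h]$ define
\[
f(x)=-\sum_{j=0}^{n-1}g(x+jh).
\]
Each formula is continuous on its closed interval because $g$ is continuous. The two formulas agree with the initial block $f\equiv0$ at $x=0$ and $x=h$ precisely because $g(0)=g(h)=0$, and at every other boundary $x=nh$ the telescoping sums collapse to a sum of values $g(kh)$ (with $k\in\mathbb{Z}$), all of which vanish. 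So $f$ is continuous on $\mathbb{R}$ and vanishes on $h\mathbb{Z}$.

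To verify $\Delta_h f=g$, I will check the identity $f(x+h)-f(x)=g(x)$ separately on each interval $[nh,(n+1)h]$: for $n\geq 0$ the definition gives $f(x+h)=f(x)+g(x)$ by a single-term shift of the sum, and for $n\leq -1$ the same telescoping argument works (the negative-index sum is designed exactly so that shifting by $h$ pulls out one copy of $g(x)$). Finally, for the consequence, I iterate: given $g$ vanishing on $h\mathbb{Z}$, the construction yields $F_1$ vanishing on $h\mathbb{Z}$ with $\Delta_hF_1=g$; applying the construction to $F_1$ produces $F_2$ vanishing on $h\mathbb{Z}$ with $\Delta_h F_2=F_1$, hence $\Delta_h^2F_2=g$; proceeding inductively gives $F_m$ with the required properties.

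The only subtle point is the continuity check at the node points $nh$; this is exactly where the hypothesis $g(h\mathbb{Z})=\{0\}$ is used, and is what forces us to normalize $f$ so that it also vanishes on $h\mathbb{Z}$ (otherwise the piecewise pieces will not glue). Everything else is bookkeeping on sums.
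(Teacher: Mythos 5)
Your construction is exactly the one in the paper (the paper writes the same sums by parametrizing $z=x\pm kh$ with $x\in[0,h)$, which after re-indexing gives your formulas $\sum_{j=1}^{n}g(x-jh)$ and $-\sum_{j=0}^{n-1}g(x+jh)$), and the continuity-at-nodes and telescoping checks, as well as the iteration for $F_m$, match the paper's argument. The proposal is correct and takes essentially the same approach.
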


\begin{proof} Given $g\in C(\mathbb{R})$ satisfying $g(h\mathbb{Z})=\{0\}$, it is easy to check that the function
\[
f(z)=\left \{
\begin{array}{cccccc}
\sum_{j=0}^{k-1}g(x+jh) & \text{if} & z=x+kh, \ k\in\mathbb{N}\setminus\{0\}, \text{ and } x\in [0,h) \\
-\sum_{j=1}^{k}g(x-jh) & \text{if} & z=x-kh, \ k\in\mathbb{N}\setminus\{0\}, \text{ and } x\in [0,h) \\
0 & \text{if} & z\in [0,h) \\
\end{array} \right.
\]
is continuous and satisfies $f(h\mathbb{Z})=\{0\}$ and $\Delta_hf=g$. The second claim of the lemma follows by iteration of this argument.
\end{proof}
\begin{example} \label{ejemp}
Let us now assume that $h_1,h_2>0$, $h_1/h_2\in\mathbb{Q}$ , and $m\in\mathbb{N}$ ($m\geq 1$). Obviously, there exists $h>0$ and $p,q\in\mathbb{N}$ such that $h_1=ph$ and $h_2=qh$. Consider the function $\phi\in C(\mathbb{R})$ defined by $\phi(x)=|x|$ for $|x|\leq h/2$ and $\phi(x+h)=\phi(x)$ for all $x\in\mathbb{R}$ and use Lemma \ref{ecuaciondiferencias} with $g=\phi$ to construct, for each $m\geq 2$, a function $f_m\in C(\mathbb{R})$ such that  $\Delta_h^{m-1}f_m=\phi$. Take $f_1=\phi$. Then $\Delta_h^{m}f_m=\Delta_h\phi=0$ for all $m$. This, in conjunction with Lemma \ref{propiedaddiferencias}, implies that the space $V_m=\mathbf{span}\{f_{m}\}$ satisfies   $\Delta_{h_1}^{m}(V_m)\cup \Delta_{h_2}^{m}(V_m)=\{0\} $. Obviously, $H=\{0\}$ is translation invariant. On the other hand, $V_m$ is not a subset of any finite dimensional translation invariant subspace of $C(\mathbb{R})$, since $f_m\in V_m$ is not an exponential polynomial (indeed, it is not a differentiable function).
\end{example}

Let $d$ be a positive integer. If $G$ denotes the additive subgroup of $\mathbb{R}^d$ generated by the elements
$\{h_1,\cdots,h_t\}$, then it is well-known \cite[Theorem 3.1]{W} that $\overline{G}$, the topological closure of $G$ with the Euclidean topology, satisfies  $\overline{G}=V\oplus \Lambda$, where
$V$ is a vector subspace of $\mathbb{R}^d$ and $\Lambda$ is a discrete additive subgroup of $\mathbb{R}^d$. Furthermore, the case when $G$ is dense in $\mathbb{R}^d$, or, what is the same, the case whenever $V=\mathbb{R}^d$, has been characterized in several different ways (see e.g., \cite[Theorem 442, page 382]{HW},  \cite[Proposition 4.3]{W}).  In the next  proposition  we prove that, under the notation we have just introduced, if $V$ is a proper vector subspace of $\mathbb{R}^d$,  there are continuous functions $f$  satisfying that $\Delta_{h_k}^{m_k}f$ is a continuous exponential polynomial for $k=1,2,\cdots,t$ but $f$ is not an exponential polynomial.

\begin{prop} \label{otro}
Let $t$ be a positive integer, let $n_1,n_2,\dots,n_t$ be natural numbers, and assume that the subgroup $G$ of $\R^d$ generated by $\{h_1,h_2,\dots,h_t\}$ satisfies $\overline{G}=V\oplus \Lambda$, where $V$ is a proper vector subspace of $\mathbb{R}^d$, and $\Lambda$ is a discrete additive subgroup of $\mathbb{R}^d$. Then there exist $H$ linear finite dimensional subspace of $C(\mathbb{R}^d)$ satisfying
$$\bigcup_{k=1}^t \Delta_{h_k}(H) \subseteq H,$$
and  $\varphi:\mathbb{R}^d\to\mathbb{R}$, continuous function satisfying
\[
\Delta_{h_k}^{n_k}\varphi \in H
\]
for $k=1,\cdots,t$, such that  $\varphi$ is not an exponential polynomial on $\mathbb{R}^d$.
\end{prop}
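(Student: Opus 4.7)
The strategy is to reduce to the one-dimensional construction of Example~\ref{ejemp} by precomposing with a suitable linear functional that annihilates the ``dense directions'' of $\overline{G}$.

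First I would choose a nonzero linear functional $L:\mathbb{R}^d\to\mathbb{R}$ with $L|_V=0$ and $L(h_k)\in\mathbb{Z}$ for every $k$. Since $\Lambda$ is a discrete subgroup of $\mathbb{R}^d$, it is free abelian of some finite rank $r\ge 0$; fix a $\mathbb{Z}$-basis $\{e_1,\dots,e_r\}$ of $\Lambda$. Define $L$ on $V\oplus\mathrm{span}_{\mathbb{R}}(\Lambda)$ by $L|_V=0$ and $L(e_i)=1$ for each $i$ (when $r=0$, pick instead any nonzero linear form vanishing on the proper subspace $V$), and extend $L$ by linearity to $\mathbb{R}^d$. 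Writing $h_k=v_k+\lambda_k$ with $v_k\in V$ and $\lambda_k=\sum_i n_{k,i}\,e_i\in\Lambda$, this yields $L(h_k)=\sum_i n_{k,i}\in\mathbb{Z}$.

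Next, set $m=\max_k n_k$ and apply Example~\ref{ejemp} with step $h=1$ to obtain a continuous, non-differentiable function $F\in C(\mathbb{R})$ satisfying $\Delta_1^m F=0$. Define
\[
\varphi(x) := F(L(x)),\qquad H_0 := \mathrm{span}\{F(\cdot+j):j\in\mathbb{Z}\}\subseteq C(\mathbb{R}),\qquad H := \{g\circ L : g\in H_0\}\subseteq C(\mathbb{R}^d).
\]
The relation $\Delta_1^m F=0$ expresses $F(\cdot+m)$ as a fixed linear combination of $F,F(\cdot+1),\dots,F(\cdot+m-1)$, so by iteration $\dim H_0\le m$, and hence $H$ is finite-dimensional.

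It remains to verify the required properties. From the identity $\Delta_{h_k}(g\circ L)=(\Delta_{L(h_k)}g)\circ L$ together with the fact that $H_0$ is invariant under every integer-step difference (which uses $L(h_k)\in\mathbb{Z}$), one obtains both $\bigcup_{k=1}^t\Delta_{h_k}(H)\subseteq H$ and $\Delta_{h_k}^{n_k}\varphi\in H$ for each $k$. Finally, $\varphi$ is not an exponential polynomial: exponential polynomials on $\mathbb{R}^d$ are $C^\infty$, whereas for any $w\in\mathbb{R}^d$ with $L(w)\ne 0$ the slice $t\mapsto\varphi(tw)=F(tL(w))$ inherits the non-differentiability of $F$. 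The main obstacle is the construction of $L$ in the first step: it relies essentially on both that $V$ is a proper subspace (so a nonzero $L$ vanishing on $V$ exists) and that $\Lambda$ is discrete and finitely generated (so $L$ can be forced to take integer values on each $h_k$).
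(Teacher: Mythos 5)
Your argument is correct, and it shares the paper's core idea: collapse the ``dense part'' $V$ by a linear map into $\mathbb{R}$ under which the generators $h_k$ land in a discrete subgroup, then pull back the one--dimensional counterexample of Example~\ref{ejemp}. The implementations differ enough to be worth comparing. The paper reaches the discrete image group indirectly: it enlarges $V$ to a hyperplane $\widetilde{V}$ with $\widetilde{V}+\Lambda$ nondense, writes $h=P_{\widetilde{V}}(h)+s(h)w_0$, and argues that $\Gamma=\{s(h):h\in\Lambda\}$ is a nondense finitely generated subgroup of $\mathbb{R}$, hence equal to $r\mathbb{Z}$; you instead prescribe $L$ on a $\mathbb{Z}$-basis of $\Lambda$ so that $L(h_k)\in\mathbb{Z}$ holds by construction --- more direct, and the well-definedness of $L$ uses exactly that $V\cap\mathrm{span}_{\mathbb{R}}(\Lambda)=\{0\}$ in the decomposition $\overline{G}=V\oplus\Lambda$. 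Second, your $H$ is spanned by the integer translates of the bad function itself, so $\varphi\in H$ and the membership $\Delta_{h_k}^{n_k}\varphi\in H$ is automatic once $\Delta_{h_k}(H)\subseteq H$ is checked; the paper instead sets $\varphi(x+sh)=e(x)+f_m(s)$ with $e$ an auxiliary exponential polynomial and $H$ the pullback through $P_{\widetilde{V}}$ of the translation-invariant span of $e$, and must verify $\Delta_{h_k}^{m}\varphi\in H$ via the computation $\Delta_{p_k}^{m}f_m=0$ --- which is why the paper takes $m=\min_k n_k$ (so that $n_k-m\geq0$ allows $\Delta_{h_k}^{n_k}\varphi=\Delta_{h_k}^{n_k-m}(\Delta_{h_k}^{m}\varphi)\in H$), whereas for you the choice $\max$ versus $\min$ is immaterial. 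Your version is leaner; the paper's slightly heavier construction additionally exhibits an $H$ consisting entirely of exponential polynomials and not containing $\varphi$. Two pedantic points: if all $n_k=0$ your $m=\max_k n_k$ degenerates (take $m=\max(1,\max_k n_k)$; the paper's $\min$ has the same issue), and for the final step it is cleanest to note that $F=f_m$ fails to be differentiable at some $t_0$, so the slice $t\mapsto F(tL(w))$ with $L(w)\neq0$ fails to be differentiable at $t_0/L(w)$, contradicting the real-analyticity of exponential polynomials.
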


\begin{proof}
Obviously, we can take $\widetilde{V}$ a vector subspace of $\mathbb{R}^d$ of dimension $d-1$ that
contains $V$, such that   $\widetilde{V}+\Lambda$ is not dense in $\mathbb{R}^d$.  
%Then $V+\Lambda$ is a subset of $\widetilde{V}+\Lambda$ and we can assume that $\widetilde{V}+\Lambda$ is not a dense subset of $%\mathbb{R}^d$. We have two cases: either $V+\Lambda$ is a subset of $\widetilde{V}$ or $\widetilde{V}+\Lambda$ contains strictly $
%\widetilde{V}$. In any case, we can assume that  $\widetilde{V}+\Lambda$ is not dense in $\mathbb{R}^d$. 
Observe that for every $h$ in $\mathbb{R}^d$, $\widetilde{V}+h$ is an affine subspace of $\mathbb{R}^d$
which is parallel to $\widetilde{V}$. Hence, if $w_0$ is a unitary normal vector to $\widetilde{V}$,
then for every $h$ in $\mathbb{R}^d$, $\widetilde{V}+h= \widetilde{V}+ s(h)w_0$ for a certain real number $s(h)$ which
depends on $h$. Indeed, if $P_{\widetilde{V}}:\mathbb{R}^d\to\mathbb{R}^d$ denotes the orthogonal projection of $\mathbb{R}^d$ onto $\widetilde{V}$, then every vector $h\in\mathbb{R}^d$ admits a unique decomposition $h=P_{\widetilde{V}}(h)+s(h)w_0$ and, obviously,
\[
\widetilde{V}+h=\widetilde{V}+(P_{\widetilde{V}}(h)+s(h)w_0) = \widetilde{V}+s(h)w_0
\]
Furthermore, as a direct consequence of the linearity of $P_{\widetilde{V}}$ and the uniqueness of the decompositions  $h=P_{\widetilde{V}}(h)+s(h)w_0$, we have that, for all $h_1,h_2\in\mathbb{R}^d$, $s(h_1+h_2)=s(h_1)+s(h_2)$. Hence  $\widetilde{V}+\Lambda$ can be decomposed as a union of sets $\bigcup_{s\in\Gamma} \widetilde{V}+ sw_0$ with $\Gamma= \{s(h):h\in\Lambda\}$  a  subgroup of $\mathbb{R}$ which is finitely generated since $\Lambda$ is finitely generated. Moreover, $\Gamma$ is not dense in $\mathbb{R}$ because  $\widetilde{V}+\Lambda$ is not dense in $\mathbb{R}^d$. It follows that $\Gamma= r\mathbb{Z}$ for certain positive real number $r$. Hence
\[
\widetilde{V}+\Lambda = \widetilde{V}+ rw_0\mathbb{Z}
\]
Take $h=rw_0$, $m=\min\{n_k\}_{k=1}^t$ and consider the function $f_m$ constructed in Example \ref{ejemp} with  $h_1=1$ and $h_2=2$. If we take $e(x)$ any exponential polynomial on $\mathbb{R}^d$, $\widetilde{H}=\tau(\mathbf{span}\{e\})$ the smallest translation invariant subspace of  $C(\mathbb{R}^d)$  which contains $\{e\}$,  and we define $\varphi(x+s h)=e(x)+f_m(s) $ for all $x\in \widetilde{V}$ and all $s\in\mathbb{R}$.
%Let $P_{\widetilde{V}}: \mathbb{R}^d\to \mathbb{R}^d$ be the linear projection onto $\widetilde{V}$ defined by $P_{\widetilde{V}}(x+th)=x$ for all $x\in \widetilde{V}$ and all $t\in\mathbb{R}$,
Let us consider the linear space
\[
H=\{f(P_{\widetilde{V}}(z)): f\in\widetilde{H}\}.
\]
Take $F(z)=f(P_{\widetilde{V}}(z))$ with $f\in\widetilde{H}$ and let $v\in \widetilde{V}$ be fixed. Then $$F(z+v)= f(P_{\widetilde{V}}(z+v)) =f(P_{\widetilde{V}}(z)+v)=(\tau_{v}f)(P_{\widetilde{V}}(z)) \in H$$
since $\tau_{v}f\in\widetilde{H}$, and
$$F(z+h)= f(P_{\widetilde{V}}(z+h)) =f(P_{\widetilde{V}}(z)) \in H$$
It follows that $\Delta_{h}(H) \subseteq H$ and  $\Delta_{v}(H) \subseteq H$ for all $v\in \widetilde{V}$. Consequently, if we define $p_k\in\mathbb{Z}$ by $p_kr=s(h_k)$, for $k=1,\cdots,t$, then
$$\bigcup_{k=1}^t \Delta_{h_k}(H) \subseteq H,$$
since
\begin{eqnarray*}
\Delta_{h_k}(H) &=&  \Delta_{P_{\widetilde{V}}(h_k)+s(h_k)w_0}(H) \\
&=& \Delta_{P_{\widetilde{V}}(h_k)}(\Delta_{p_kh}(H)) \\
&=&  \Delta_{P_{\widetilde{V}}(h_k)}(\Delta_{h}^{p_k}(H)) \subseteq H, k=1,2,\cdots,t.
\end{eqnarray*}
Furthermore, $\varphi$ satisfies that, if $z= P_{\widetilde{V}}(z)+sh$, then
%\begin{eqnarray*}
%\Delta_{v}\varphi(z) &=&  \varphi(P_{\widetilde{V}}(z)+th+v)-\varphi(P_{\widetilde{V}}(z)+th)\\
%&=&  e(P_{\widetilde{V}}(z)+v)+f_m(t) - e(P_{\widetilde{V}}(z))-f_m(t) =\Delta_ve(P_{\widetilde{V}}(z)) \in H
%\end{eqnarray*}
%and
\begin{eqnarray*}
\Delta_{h_k}^{m}\varphi(z) &=& \sum_{i=0}^{m}\binom{m}{i}(-1)^{m-i}\varphi(z+ih_k) \\
&=&  \sum_{i=0}^{m}\binom{m}{i}(-1)^{m-i}\varphi(P_{\widetilde{V}}(z)+sh+i(P_{\widetilde{V}}(h_k)+p_kh)) \\
&=&  \sum_{i=0}^{m}\binom{m}{i}(-1)^{m-i}e(P_{\widetilde{V}}(z)+iP_{\widetilde{V}}(h_k))+ \sum_{i=0}^{m}\binom{m}{i}(-1)^{m-i}f_m(s+ip_k)\\
&=&    \Delta_{P_{\widetilde{V}}(h_k)}^{m}e(P_{\widetilde{V}}(z)) + \Delta_{p_k}^{m}f_m(s)  \\
&=&    \Delta_{P_{\widetilde{V}}(h_k)}^{m}e(P_{\widetilde{V}}(z)) + (\Delta_{1}^{m})^{p_k}f_m(s)  \\ \\
&=&    \Delta_{P_{\widetilde{V}}(h_k)}^{m}e(P_{\widetilde{V}}(z)) \in H \text{ for } k=1,2,\cdots,t,
\end{eqnarray*}
since $P_{\widetilde{V}}(h_k)\in \widetilde{V}$, $e(P_{\widetilde{V}}(z)) \in H$ and $\Delta_{v}(H) \subseteq H$ for all $v\in \widetilde{V}$.
Thus, for every $k$ we have that $n_k\geq m$, $\Delta_{h_k}(H)\subseteq H$ and $\Delta_{h_k}^{m}\varphi\in H$, which obviously implies that
\[
\Delta_{h_k}^{n_k}\varphi= \Delta_{h_k}^{n_k-m}(\Delta_{h_k}^{m}\varphi)\in H.
\]
On the other hand, $\varphi$ is not an exponential polynomial on $\mathbb{R}^d$, since it is not an analytic function because $f_m$ is non-differentiable.
\end{proof}

We now give a description, for the case when $V$ is a proper subspace of $\mathbb{R}^d$ (arbitrary $d$), of the  sets of  continuous functions $f$  satisfying that $\Delta_{h_k}^{n_k}f\in H$  for $k=1,\cdots,t$, for a certain finite dimensional subspace $H$ of $C(\mathbb{R}^d)$ which is 
$\Delta_{h_k}^{m_k}$-invariant for some $m_k$, $k=1,\cdots, t$.

%\begin{lemma} \label{propio}
%Assume that $\{h_1,\cdots,h_t\}\subseteq \mathbb{R}^d$ are such that  $\overline{G}=V\oplus \Lambda$ with $\dim (V)<d$ and $\Lambda$ a %discrete subgroup of $\mathbb{R}^d$, where $G=h_1\mathbb{Z}+h_2\mathbb{Z}+\cdots+h_t\mathbb{Z}$. Assume, furthermore, that $E$ is a %finite dimensional linear subspace of $C(\mathbb{R}^d)$ such that $\tau_{h_k}(E)\subseteq E$ for $k=1,\cdots, t$. Then $f\in E$ implies that 
%$f_{|V}$ is an exponential polynomial in $V$.
%\end{lemma}

%\begin{proof} Apply Anselone-Korevaar's theorem to $f_{|V}$.
%\end{proof}

\begin{theorem}\label{cor_montel}
Let $t$ be a positive integer, let $n_1,n_2,\dots,n_t, m_1,m_2,\cdots,m_t$ be natural numbers, let $H\subseteq C(\mathbb{R}^d)$ be a linear finite dimensional subspace satisfying
$$\bigcup_{k=1}^t \Delta_{h_k}^{m_k}(H) \subseteq H.$$
Further let $f:\mathbb{R}^d\to\mathbb{R}$ be a continuous function satisfying
\[
\Delta_{h_k}^{n_k}f\in H
\]
for $k=1,\cdots,t$. If the subgroup $G$ in $\R^d$ generated by $\{h_1,h_2,\dots,h_t\}$ satisfies $\overline{G}=V\oplus \Lambda$, where $V$ is a vector subspace of $\mathbb{R}^d$, and $\Lambda$ is a discrete additive subgroup of $\mathbb{R}^d$, then for each $\lambda$ in $\Lambda$ there exist  a continuous exponential polynomial $e_{\lambda}:\R^d\to\R$ such that
\[
f(x+\lambda)=e_{\lambda}(x) \text{ for all } x\in V.
\]
\end{theorem}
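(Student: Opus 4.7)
The plan is, for each $\lambda\in\Lambda$, to reduce the claim to an application of Anselone-Koreevar's theorem on the subspace $V$. The key task is to construct a finite-dimensional subspace $Z\subseteq C(\mathbb{R}^d)$ containing $f$ which is translation-invariant under every $h\in \overline{G}=V\oplus\Lambda$. Once such a $Z$ is produced, restriction to the affine subspace $V+\lambda$ gives a finite-dimensional subspace of $C(V)$ that is invariant under translations by every $v\in V$, and Anselone-Koreevar forces every element of this restricted space---in particular, $x\mapsto f(x+\lambda)$---to be a continuous exponential polynomial on $V$.

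To build $Z$ I would apply Lemma \ref{dos} twice, so as to sidestep the mismatch between the exponents $n_k$ and $m_k$. First, taking as input subspace $H$, operators $L_i=\Delta_{h_i}$, and exponents $s_i=m_i$, the hypothesis $\Delta_{h_k}^{m_k}(H)\subseteq H$ yields a finite-dimensional subspace $H'\supseteq H$ which is $\Delta_{h_k}$-invariant for every $k$. Then set $V_0=\mathbf{span}\{f\}+H'$: since $\Delta_{h_k}^{n_k}f\in H\subseteq H'$ and $H'$ is $\Delta_{h_k}$-invariant, $V_0$ is $\Delta_{h_k}^{n_k}$-invariant for every $k$. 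A second application of Lemma \ref{dos}, now with input $V_0$ and exponents $s_i=n_i$, produces a finite-dimensional subspace $Z\supseteq V_0\ni f$ which is $\Delta_{h_k}$-invariant for every $k$.

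Next, because $\tau_{h_k}=I+\Delta_{h_k}$, the $\Delta_{h_k}$-invariance of $Z$ gives $\tau_{h_k}(Z)\subseteq Z$; as $\tau_{h_k}|_Z$ is injective on the finite-dimensional space $Z$, it is an automorphism of $Z$, hence $\tau_{-h_k}(Z)\subseteq Z$ as well. Thus $Z$ is $\tau_h$-invariant for every $h$ in the subgroup $G$ generated by $\{h_1,\dots,h_t\}$. The stabilizer $T=\{h\in\mathbb{R}^d:\tau_h(Z)\subseteq Z\}$ is a closed subgroup of $\mathbb{R}^d$ (closedness follows from $Z$ being a finite-dimensional, hence closed, subspace of $C(\mathbb{R}^d)$ in the compact-open topology, together with the continuity of $h\mapsto \tau_h g$ for every continuous $g$), so $T\supseteq \overline{G}=V\oplus \Lambda$.

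Fix $\lambda\in\Lambda$ and define $R_\lambda:Z\to C(V)$ by $R_\lambda(g)(x)=g(x+\lambda)$ for $x\in V$. The image $R_\lambda(Z)$ is finite-dimensional, and the identity $R_\lambda(\tau_v g)=\tau_v R_\lambda(g)$ for $v\in V$, combined with $V\subseteq T$, shows that $R_\lambda(Z)$ is translation-invariant on $V$. Anselone-Koreevar applied on $V$ then forces every element of $R_\lambda(Z)$, in particular $x\mapsto f(x+\lambda)$, to be a continuous exponential polynomial on $V$; setting $e_\lambda(x):=R_\lambda(f)(\pi_V(x))$ for any linear projection $\pi_V:\mathbb{R}^d\to V$ supplies the required continuous exponential polynomial $e_\lambda:\mathbb{R}^d\to\mathbb{R}$ with $e_\lambda(x)=f(x+\lambda)$ for $x\in V$. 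The main obstacle I anticipate is the bookkeeping needed to run Lemma \ref{dos} given that $n_k$ and $m_k$ need not be commensurable, which is precisely why I prefer the two-stage construction above rather than attempting it in a single shot.
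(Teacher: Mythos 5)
Your proof is correct. Its first stage --- the double application of Lemma \ref{dos} to produce a finite-dimensional, $\Delta_{h_k}$-invariant subspace $Z$ containing both $f$ and $H$ --- coincides with the paper's construction of $Z=\diamond_{\Delta_{h_1},\dots,\Delta_{h_t}}(\mathbf{span}\{f\}+\widetilde{H})$, but the second stage takes a genuinely different and shorter route. The paper does not use $Z$ directly after constructing it: it instead builds the auxiliary function $F=f\circ P_V$ and the projected space $\widetilde{\widetilde{H}}$, enlarges $\{h_1,\dots,h_t\}$ by vectors $h_1^*,\dots,h_s^*\in V^{\perp}$ so that the enlarged set generates a dense subgroup of $\mathbb{R}^d$, invokes the multinomial computation of Lemma \ref{nuevo-ak} (together with the closedness of the finite-dimensional space $\widetilde{H}$) to obtain $\Delta_{P_V(h_i)}^{N}f\in\widetilde{H}$ with $N=n_1+\cdots+n_t$, and finally applies Theorem \ref{EP} to $F$; the shift by $\lambda$ is then handled by rerunning the whole argument on $g_\lambda=\tau_\lambda f$. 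You instead observe that the stabilizer $T=\{h\in\mathbb{R}^d:\tau_h(Z)\subseteq Z\}$ is a closed subgroup (since $Z$, being finite dimensional, is closed in the compact-open topology and $h\mapsto\tau_h g$ is continuous), so the $\tau_{h_k}$-invariance of $Z$ upgrades for free to $\tau_h$-invariance for all $h\in\overline{G}\supseteq V$; restricting $Z$ to the coset $V+\lambda$ then lets you apply Anselone--Korevaar on $V$ itself. This is the same closedness principle the paper encapsulates in the last sentence of Lemma \ref{nuevo-ak}, but applied once to $Z$ rather than to $\widetilde{H}$: it dispenses with $F$, with the auxiliary vectors $h_j^*$, and with the exponent bookkeeping of Lemma \ref{nuevo-ak}; it handles all $\lambda$ uniformly through the single space $Z$ (indeed your argument delivers the conclusion for every $\lambda\in\mathbb{R}^d$, not only $\lambda\in\Lambda$); and it requires only the routine remark that composing an exponential polynomial on $V$ with a linear projection $\pi_V$ produces one on $\mathbb{R}^d$. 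What the paper's route buys in exchange is a globally defined exponential polynomial $e_0=F$ extending $f|_V$ obtained directly from Theorem \ref{EP}, at the cost of noticeably more machinery.
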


For the proof of this result, we need firstly to introduce the following technical result:

\begin{lemma}\label{nuevo-ak}
Let $(G,+)$ be a commutative topological group, $h_1,\cdots,h_t\in G$ and  $G'=h_1\mathbb{Z}+\cdots+h_t\mathbb{Z}$. Let $H$ be a vector subspace of $\mathbb{C}^G$ such that $\tau_h(H)\subseteq H$ for all $h\in G'$, and assume that $f:G\to \mathbb{C}$ satisfies  $\Delta_{h_i}^{n_i}f\in H$ for certain natural numbers $n_i$ and $i=1,\cdots, t$. Take $N=n_1+\cdots+n_t$. Then  $\Delta_h^Nf\in H$ for all $h\in G'$. Moreover, if $H$ is a closed subspace of $C(G)$, then $\Delta_h^Nf\in H$ for all $h\in \overline{G'}$. 
\end{lemma}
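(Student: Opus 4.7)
The plan is to handle $h\in G'$ first by an inductive argument based on the commutativity of translation operators, and then to deduce the statement for $h\in\overline{G'}$ via a continuity/closedness argument.

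The algebraic workhorse will be the operator identity
\[
\Delta_{a+b}=\tau_{a+b}-I=\tau_a(\tau_b-I)+(\tau_a-I)=\tau_a\Delta_b+\Delta_a,
\]
valid for all $a,b\in G$. Since every operator in sight is built from translations on a commutative group, they commute pairwise, and the binomial theorem gives
\[
\Delta_{a+b}^{p+q}=\sum_{k=0}^{p+q}\binom{p+q}{k}\tau_a^k\Delta_b^k\Delta_a^{p+q-k}.
\]
I would use this to establish the key \emph{additive step}: if $a,b\in G'$, $\Delta_a^pf\in H$ and $\Delta_b^qf\in H$, then $\Delta_{a+b}^{p+q}f\in H$. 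The proof is a case split on $k$. If $k\geq q$, then $\Delta_b^kf=\Delta_b^{k-q}(\Delta_b^qf)\in H$ because $\Delta_b$ is a polynomial in $\tau_b$ and $\tau_b$ preserves $H$; the remaining factor $\tau_a^k\Delta_a^{p+q-k}$ is a polynomial in $\tau_a$ and so also preserves $H$. If $k<q$, then $p+q-k>p$, so $\Delta_a^{p+q-k}f\in H$ by the mirrored argument, and $\tau_a^k\Delta_b^k$ preserves $H$. Every summand is therefore in $H$.

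Next I would settle integer multiples of a single generator: for every $k\in\mathbb{Z}$, $\tau_{h_i}^k$ preserves $H$ (since $kh_i\in G'$), and the telescoping
\[
\tau_{h_i}^k-I=\Delta_{h_i}\bigl(I+\tau_{h_i}+\cdots+\tau_{h_i}^{k-1}\bigr)\quad(k\geq1),
\]
together with its analogue for $k<0$, shows that $\Delta_{kh_i}^{n_i}f=L_{k,i}(\tau_{h_i})\,\Delta_{h_i}^{n_i}f\in H$ for some Laurent polynomial $L_{k,i}$. For an arbitrary $h=k_1h_1+\cdots+k_th_t\in G'$, I would then iterate the additive step along the partial sums $a_j=k_1h_1+\cdots+k_jh_j$, with $b=k_{j+1}h_{j+1}$, $p=n_1+\cdots+n_j$ and $q=n_{j+1}$, eventually reaching $\Delta_h^Nf\in H$ with $N=n_1+\cdots+n_t$.

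For the closure statement, I would equip $C(G)$ with the topology of uniform convergence on compact subsets. Given $h\in\overline{G'}$ and a net $h_\alpha\in G'$ with $h_\alpha\to h$, continuity of $f$ together with uniform continuity on compact sets yields $\tau_{kh_\alpha}f\to\tau_{kh}f$ in $C(G)$ for each $k\in\{0,1,\dots,N\}$, hence $\Delta_{h_\alpha}^Nf\to\Delta_h^Nf$ in $C(G)$; since each $\Delta_{h_\alpha}^Nf$ lies in the closed subspace $H$, so does the limit. The main obstacle I anticipate is the bookkeeping in the additive step: one must check that the case split on $k$ exhausts every summand of the binomial expansion and that, in both cases, the leftover operator really is a polynomial in translations by elements of $G'$, so that $H$-invariance is preserved along the way.
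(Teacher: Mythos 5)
Your proposal is correct and follows essentially the same route as the paper: you decompose $\Delta_h^N$ into summands, each of which carries a factor $\Delta_{h_i}^{\alpha_i}$ with $\sum_i\alpha_i=N$ composed with Laurent polynomials in translations by elements of $G'$, and use the pigeonhole fact that some $\alpha_k\geq n_k$; your two-term identity $\Delta_{a+b}=\tau_a\Delta_b+\Delta_a$ iterated over the partial sums $a_j$ is just the binomial, inductive version of the paper's one-shot telescoping-plus-multinomial expansion, and your treatment of negative coefficients via $\tau_{-h}-I=-\tau_{-h}(\tau_h-I)$ matches the paper's. The closure step is also the same continuity argument (you use nets where the paper uses sequences, which is if anything slightly more careful for a general topological group).
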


%%%

%%%

\begin{proof} The proof follows similar arguments to those used in \cite[Theorem 2]{AS_AM} and, in fact, this lemma is a generalization of that result, which follows as a Corollary just imposing $H=\{0\}$. 

Take $N=n_1+\cdots +n_t$ and let $h\in G'$. Then there exist $m_1,\cdots,m_t\in\mathbb{Z}$ such that $h=m_1h_1+\cdots+m_th_t$ and
\begin{eqnarray*}
\Delta_h^Nf &=& \Delta_{m_1h_1+\cdots+m_th_t}^Nf=(\tau_{m_1h_1+\cdots+m_th_t}-1_d)^N(f)\\
&=& (\tau_{h_1}^{m_1}\tau_{h_2}^{m_2}\dots \tau_{h_t}^{m_t}-1_d)^{N}(f)\\
&=& \bigl[(\tau_{h_1}^{m_1}\tau_{h_2}^{m_2}\dots \tau_{h_t}^{m_t}-\tau_{h_2}^{m_2}\dots \tau_{h_t}^{m_t}) 
+ (\tau_{h_2}^{m_2}\dots \tau_{h_t}^{m_t}-\tau_{h_3}^{m_3}\dots \tau_{h_t}^{m_t}) \\
&\ & \ + (\tau_{h_3}^{m_3}\tau_{h_4}^{m_4}\dots \tau_{h_t}^{m_t}-\tau_{h_4}^{m_4}\dots \tau_{h_t}^{m_t})+
(\tau_{h_4}^{m_4}\dots \tau_{h_t}^{m_t}-\tau_{h_5}^{m_5}\dots \tau_{h_t}^{m_t}) \\
&\ & \  \cdots \\ 
&\ & \ + (\tau_{h_{t-2}}^{m_{t-2}}\tau_{h_{t-1}}^{m_{t-1}} \tau_{h_t}^{m_t}-\tau_{h_{t-1}}^{m_{t-1}}\tau_{h_t}^{m_t})+
(\tau_{h_{t-1}}^{m_{t-1}}\tau_{h_t}^{m_t}- \tau_{h_t}^{m_t})+(\tau_{h_t}^{m_t}-1_d)\bigr]^{N} (f) \\
&=& \bigl[(\tau_{h_1}^{m_1}-1_d)\tau_{h_2}^{m_2}\dots \tau_{h_t}^{m_t}+(\tau_{h_2}^{m_2}-1_d)\tau_{h_3}^{m_3}\dots \tau_{h_t}^{m_t}\\
&\ & \ \ +\dots+(\tau_{h_{t-1}}^{m_{t-1}}-1_d)\tau_{h_t}^{m_t}+ (\tau_{h_t}^{m_t}-1_d)\bigr]^{N}(f)\, .
%&=& (\tau_{h_1}^{m_1}\cdots \tau_{h_t}^{m_t}-\tau_{h_2}^{m_2}\cdots \tau_{h_t}^{m_t} + \cdots + \tau_{h_t}^{m_t}- 1_d)^N(f)\\
\end{eqnarray*}
Expanding the $N$-th power, by the Multinomial Theorem, we obtain a sum of the form
\begin{equation} \label{multinomial}
\sum_{0\leq \alpha_1,\dots,\alpha_t\leq N} \frac{N!}{\alpha_1!\dots \alpha_t!} \prod_{i=1}^t  
(\tau_{h_{i+1}}^{m_{i+1}}\dots \tau_{h_t}^{m_t})^{\alpha_i}(\tau_{h_i}^{m_i}-1_d)^{\alpha_i}(f)\,,
\end{equation}
where the sum is also restricted by  $\alpha_1+\alpha_2+\dots+\alpha_t=N$, which implies that,  for some $k\in\{1,\cdots, t\}$,  $\alpha_k\geq n_k$.  In particular, for this concrete $k$, if $m_k>0$, we have that 
$$(\tau_{h_k}^{m_k}-1_d)^{\alpha_k}(f) = (\tau_{h_k}^{m_k}-1_d)^{\alpha_k-n_k}(\tau_{h_k}^{m_k}-1_d)^{n_k}(f) \in (\tau_{h_k}^{m_k}-1_d)^{\alpha_k-n_k}(H)\subseteq H ,$$ 
which follows from the assumption that $H$ is $G'$-invariant and the equation 
\begin{equation*}
(\tau_{h_k}^{m_k}-1_d)^{n_k}=(\tau_{h_k}^{m_k-1}+\tau_{h_k}^{m_k-2}+\dots+\tau_{h_k}+1_d)^{n_k} (\tau_{h_k}-1_d)^{n_k}\, .
\end{equation*}
If $m_k<0$, we can apply a similar argument, since we have that  
$$\tau_{h}^{-1}-1_d= \tau_{-h}-1_d=-\tau_{-h}(\tau_{h}-1_d)\, .$$
Consequently, all summands in \eqref{multinomial} belong to the vector space $H$, which proves the first claim of the lemma. If $H$ is a closed  subspace of $C(G)$ and $h\in \overline{G'}$ then there exist a sequence $\{g_n\}\subset G'$ converging to $h$ and the sequence given by  $f_n=\Delta_{g_n}^Nf$ converges to $\Delta_{h}^Nf$, which belongs to $H$ since $H$ is closed and $g_n\in H$ for all $n$. 
\end{proof}

\begin{proof}[Proof of Theorem \ref{cor_montel}]
Lemma \ref{dos} allow us to substitute $H$ satisfying $$\bigcup_{k=1}^t \Delta_{h_k}^{m_k}(H) \subseteq H.$$ by a finite dimensional subspace $\widetilde{H}$ of $C(\mathbb{R}^d)$ which contains $H$ and satisfies $$\bigcup_{k=1}^t \Delta_{h_k}(\widetilde{H}) \subseteq \widetilde{H}.$$
If we take $W=\mathbf{span}\{f\}$, we  can apply again Lemma \ref{dos} with $E=C(\mathbb{R}^d)$, $L_i=\Delta_{h_i}$, $i=1,\cdots,t$, to the vector space
$M=W+\widetilde{H}$, since
\begin{eqnarray*}
\Delta_{h_i}^{n_i}(W+\widetilde{H}) &=& \Delta_{h_i}^{n_i}(W)+ \Delta_{h_i}^{n_i}(\widetilde{H}) \\
&\subseteq&  H+\widetilde{H} \\
&\subseteq& \widetilde{H}\subseteq W+\widetilde{H},
\end{eqnarray*}
Hence
$M\subseteq Z=\diamond_{\Delta_{h_1},\Delta_{h_2},\cdots,\Delta_{h_t}}(M)$ and $Z$ is a finite dimensional subspace of $C(\mathbb{R}^d)$ satisfying $\Delta_{h_i}(Z)\subseteq Z$ , $i=1,2,\cdots,t$.  Let $V^{\perp}$ denote the orthogonal complement of $V$ in
$\mathbb{R}^d$ with respect to the standard scalar product and let $P_{V}:\mathbb{R}^d\to \mathbb{R}^d$ denote the orthogonal projection onto $V$ with respect to the standard scalar product of $\mathbb{R}^d$. We define the function $F:\R^d\to \R$ by $F(x)=f(P_V(x))$.  Obviously, $F$ is a continuous extension of $f_{|V}$.  Furthermore, if $\widetilde{H}$ admits a basis $\{g_k\}_{k=1}^m$, we introduce the new vector space
$\widetilde{\widetilde{H}}=\mathbf{span}\{G_k\}_{k=1}^m$, where $G_k(x)=g_k(P_V(x))$ for all  $x\in \mathbb{R}^d$. Then, if $x=v+w\in\mathbb{R}^d$ with $v\in V$, $w\in V^{\perp}$, $k\in \{1,\cdots,t\}$, and $j\in\{1,\cdots,m\}$, we have that
\begin{eqnarray*}
\Delta_{h_k}G_j(x) &=& G_j(v+w+h_k)-G_j(v+w)\\
&=& g_j(v+h_k)-g_j(v)\\
&=& \sum_{i=1}^m\alpha_{i,j}g_i(v) \text{, since } \Delta_{h_k}(\widetilde{H})\subseteq \widetilde{H}\\
&=& \sum_{i=1}^m\alpha_{i,j}G_i(v+w)\\
&=& \sum_{i=1}^m\alpha_{i,j}G_i(x).
\end{eqnarray*}
Thus, $\Delta_{h_k}(\widetilde{\widetilde{H}})\subseteq \widetilde{\widetilde{H}}$ for $k=1,\cdots,t$.

Take $\{h_1^*,\cdots,h_s^*\}\subseteq V^{\perp}$ such that
$\{h_1,\cdots,h_t,h_1^*,\cdots,h_s^*\}$ spans a dense subgroup of $\mathbb{R}^d$. Then if $x=v+w\in\mathbb{R}^d$ with $v\in V$, $w\in V^{\perp}$, $k\in \{1,\cdots,s\}$, and $j\in\{1,\cdots,m\}$, we have that
\begin{eqnarray*}
\Delta_{h_k^*}G_j(x) &=& G_j(v+w+h_k^*)-G_j(v+w)\\
&=& G_j(v)-G_j(v)=0.
\end{eqnarray*}
Hence we also have that $\Delta_{h_k^*}(\widetilde{\widetilde{H}})\subseteq \widetilde{\widetilde{H}}$ for $k=1,\cdots,s$. Anselone-Korevaar's theorem implies that all elements of $\widetilde{\widetilde{H}}$ are continuous exponential polynomials on $\mathbb{R}^d$.

Let us now do the computations for $F$. Take $N=n_1+\cdots+n_t$.  Then
\begin{eqnarray*}
&\ & \Delta_{h_i}^{N}F(x) = \sum_{k=0}^{N}\binom{N}{k}(-1)^{N-k}F(x+kh_i) \\
&=&  \sum_{k=0}^{N}\binom{N}{k}(-1)^{N-k}F(P_V(x)+kP_V(h_i)+[(x-P_V(x))+k(h_i-P_V(h_i))]) \\
& = & \sum_{k=0}^{N}\binom{N}{k}(-1)^{N-k}f(P_V(x)+kP_V(h_i)) \\
 &=& \Delta_{P_V(h_i)}^{N}f(P_V(x)) \\
 % &=& \Delta_{h_i}^{N}f(P_V(x)) \text{, since } h_i=P_V(h_i)\\
 &=& \sum_{j=1}^m a_{i,j}g_j(P_V(x))  \text{ since }   \Delta_{P_V(h_i)}^{N}f\in \widetilde{H} \\
  &=& \sum_{j=1}^m a_{i,j}G_j(x) \in\widetilde{\widetilde{H}}.
\end{eqnarray*}
Here we have used that $\widetilde{H}$ is $G$-invariant and Lemma \ref{nuevo-ak} to conclude that $\Delta_{P_V(h_i)}^{N}f\in \widetilde{H}$ since $P_V(h_i)\in V\subseteq \overline{G}$ and $\widetilde{H}$ is a closed subspace of $C(\mathbb{R}^d)$, since it is finite dimensional.  On the other hand,  $\Delta_{h_j^*}F=0 \in\widetilde{\widetilde{H}}$ for $j=1,\cdots,s$. Hence we can apply Theorem \ref{EP} to $F$ to prove that $F$ is an exponential polynomial whose restriction to $V$ is $f_{|V}$. Thus, if we set $e_0=F$, we have that $e_0$ is a continuous exponential polynomial on $\mathbb{R}^d$ and $f(x)=e_0(x)$ for all $x$ in $V$.

Now let $\lambda$ be arbitrary in $\Lambda$ and we consider the function $g_{\lambda}:V\to\R$ defined by $g_{\lambda}(x)=f(x+\lambda)$ for $x$ in $V$ and $\lambda$ in $\Lambda$. Then $g_\lambda=\tau_{\lambda}(f)$, so that
$$\Delta_{h_i}^{n_i}g_{\lambda}= \Delta_{h_i}^{n_i}\tau_{\lambda}(f)=\tau_{\lambda}( \Delta_{h_i}^{n_i}(f))\in\tau_{\lambda}(H)\subseteq  \tau_{\lambda}(\widetilde{H}) \subseteq \widetilde{H}$$
Thus we can repeat all arguments above with $g_{\lambda}$ instead of $f$ to get that, for some continuous exponential polynomial $e_{\lambda}$ defined on $\mathbb{R}^d$ we have that
 $g_{\lambda}(x)=e_{\lambda}(x)$ for all $x\in V$. Hence, if $x\in V$ and $\lambda\in \Lambda$,
 \[
 f(x+\lambda)=g_{\lambda}(x)=e_{\lambda}(x).
 \]
This ends the proof.

\end{proof}

\end{document}